\newtheorem{theorem}{Theorem}[section]
\newtheorem{lemma}{Lemma}[section]
\newtheorem{remark}{Remark}[section]
\numberwithin{equation}{section}
\newtheorem*{theorem*}{Theorem}
\begin{document}

\subjclass[2010]{Primary 46E35. Secondary 26A33, 26D10, 46E30.}

\title[The Bourgain-Br\'ezis-Mironescu formula in arbitrary bounded domains]{The Bourgain-Br\'ezis-Mironescu formula in arbitrary bounded domains}

\author{Irene Drelichman}
\address{IMAS (UBA-CONICET), Facultad de Ciencias Exactas y Naturales, Universidad de Buenos Aires, Ciudad Universitaria, 1428 Buenos Aires, Argentina}
\email{irene@drelichman.com}

\author{Ricardo G. Dur\'an}
\address{IMAS (UBA-CONICET) and Departamento de Matem\'atica, Facultad de Ciencias Exactas y Naturales, Universidad de Buenos Aires, Ciudad Universitaria, 1428 Buenos Aires, Argentina}
\email{rduran@dm.uba.ar}

\thanks{Supported by FONCYT under grants  PICT-2018-03017 and PICT-2018-00583, and by Universidad de Buenos Aires under grant 20020160100144BA}

\begin{abstract}
We obtain a  Bourgain-Br\'ezis-Mironescu formula on the limit behaviour of a modified fractional Sobolev seminorm when $s\nearrow 1$, which is valid in arbitrary bounded domains. In the case of extension domains, we recover the classical result.
\end{abstract}

\keywords{Fractional Sobolev spaces, Gagliardo seminorm, irregular domains.}

\maketitle

\section{Introduction}

The celebrated Bourgain-Br\'ezis-Mironescu formula \cite{BBM} (see also \cite{MS,M}) states that, for any extension domain $\Omega \subset \mathbb{R}^n$, $1<p<\infty$, and $f\in L^p(\Omega)$,
\begin{equation}
\label{BBM}
\lim_{s\nearrow 1} (1-s)  \int_\Omega \int_\Omega \frac{|f(x)-f(y)|^p}{|x-y|^{n+sp}} \, dx \, dy = K_{n,p}  \int_\Omega |\nabla f(x)|^p \, dx
\end{equation}
where $K_{n,p}= \frac{1}{p}\int_{\mathbb{S}^{n-1}} |\sigma_n|^p \, d\sigma$, with the convention that  $\|\nabla f\|_p =\infty$ if $f\not\in W^{1,p}(\Omega)$.  

The aim of this note is to obtain an analogous result in bounded irregular domains. As it is usual, for $0<s<1$,  we will denote
$$
W^{s,p}(\Omega)= \left\{ f \in L^p(\Omega) : \int_{\Omega} \int_\Omega \frac{|f(x)-f(y)|^p}{|x-y|^{n+sp}} \, dx \, dy   <\infty  \right\}.
$$
One can easily check that, for arbitrary $\Omega$, it may occur that $W^{1,p}(\Omega)\not\subset W^{s,p}(\Omega)$ (see, e.g. \cite[Example 2.1]{DD-interpolacion}). Therefore, it is not reasonable to expect \eqref{BBM} to hold verbatim in all domains  (see \cite[Remark 5]{B}).  

The author of \cite{B} suggested a possible solution would be to consider $\delta(x,y)$ the geodesic distance in $\Omega$ and asked whether, for $f\in L^p(\Omega)$ and $\Omega \subset \mathbb{R}^n$ a bounded domain, 
\begin{equation}
\label{limsup}
\lim\sup_{s\nearrow 1} (1-s)\int_\Omega \int_\Omega \frac{|f(x)-f(y)|^p}{\delta(x,y)^{n+sp}} \, dx \, dy < \infty
\end{equation}
would guarantee that $f \in W^{1,p}(\Omega)$ and, if so, whether
$$
\lim_{s\nearrow 1} (1-s)\int_\Omega \int_\Omega \frac{|f(x)-f(y)|^p}{\delta(x,y)^{n+sp}} \, dx \, dy =  K_{n,p}  \int_\Omega |\nabla f(x)|^p \, dx
$$
 (see \cite[Open problem 1]{B}). For $1<p<\infty$, the first question was positively answered in \cite[Corollary 1.7]{LS}. To the best of our knowledge, the second one is still unanswered, but the authors of \cite{LS}  found  a different limit that equals $K_{n,p} \|\nabla f\|_p^p$ under assumption \eqref{limsup} in arbitrary domains (see \cite[Theorem 1.5]{LS} and \cite{LS2} for a precise statement).  

As announced, we also tackle the problem of finding a suitable replacement of \eqref{BBM} for arbitrary bounded domains, but propose a different solution. Actually, we also give a positive answer to the first question in  \cite[Open problem 1]{B} and show that, in addition, one may replace hypothesis \eqref{limsup} by a weaker one, namely, changing $\lim\sup$ by $\lim\inf$. 

Before we state our  theorem, let us recall that, as shown in \cite{M},  there is a deep connection between the limit \eqref{BBM} and the fact that, for $\Omega$ an extension domain or the whole space $\mathbb{R}^n$, $W^{s,p}(\Omega)$ is the real interpolation space between $L^p(\Omega)$ and  $W^{1,p}(\Omega)$, i.e. $(L^p(\Omega), W^{1,p}(\Omega))_{s,p}=W^{s,p}(\Omega)$. This is clearly not the case if the inclusion $W^{1,p}(\Omega)\subset W^{s,p}(\Omega)$ fails. 

The  characterization of the interpolation space $(L^p(\Omega), W^{1,p}(\Omega))_{s,p}$ for arbitrary $\Omega$ is a difficult open problem, but in  \cite{DD-interpolacion} the authors proved that, for a certain class of irregular domains, one has  $(L^p(\Omega), W^{1,p}(\Omega))_{s,p}=\widetilde{W}^{s,p}(\Omega)$, with
$$
\widetilde{W}^{s,p} (\Omega)= \left\{ f \in L^p(\Omega) : \int_\Omega \int_{|x-y|<\tau d(x)} \frac{|f(x)-f(y)|^p}{|x-y|^{n+sp}} \, dx \, dy   <\infty  \right\},
$$
where $\tau\in (0,1)$ is a fixed parameter and $d(x)=\mbox{dist}(x, \partial\Omega)$. 

This fractional space had been previously introduced in \cite{HV} and  plays an important role in the study of fractional Poincar\'e and Sobolev-Poincar\'e inequalities in irregular domains (see \cite{DD-fracpoincare, DIV, HV,  MP}). More importantly, it is known that it coincides with  $W^{s,p}(\Omega)$ (with equivalence of norms) when $\Omega$ is a Lipschitz domain \cite[Proposition 5]{D} or, more generally, a uniform domain \cite[Corollary 4.5]{PS}. 

Moreover, a careful reading of the proof in \cite{BBM} shows that, when $\Omega$  is an extension domain and $f$ a smooth function, one has
\begin{equation}
\label{lim0}
\lim_{s\nearrow 1} (1-s) \int_\Omega \int_{|x-y|>\tau d(x)} \frac{|f(y)-f(x)|^p}{|x-y|^{n+sp}} \, dy \, dx = 0
\end{equation}
(see  \cite{BBM} after equation (5)). However, adding this zero term allows one to recover the Gagliardo seminorm, which is  symmetric  in $x$ and $y$, a fact  that is essential for the rest of the proof in \cite{BBM}. Needless to say, \eqref{lim0} fails badly in irregular domains (one may use again \cite[Example 2.1]{DD-interpolacion}).

The above results strongly support the idea that, replacing the $W^{s,p}(\Omega)$ seminorm by the $\widetilde{W}^{s,p}(\Omega)$ seminorm in \eqref{BBM}, one should  obtain a formula valid in any bounded domain. 
In fact, we prove:
\begin{theorem}
\label{teo}
Let $\Omega \subset \mathbb{R}^n$ be a bounded domain, $d(x)=\mbox{dist}(x, \partial\Omega)$, and $\tau \in (0,1)$. If $1<p<\infty$ and $f\in L^p(\Omega)$,
\begin{equation}
\label{BBM-rest}
\lim_{s\nearrow 1} (1-s) \int_\Omega \int_{|x-y|<\tau d(x)} \frac{|f(y)-f(x)|^p}{|x-y|^{n+sp}} \, dy \, dx = K_{n,p} \int_\Omega |\nabla f (x)|^p \, dx,
\end{equation}
where $K_{n,p}= \frac{1}{p} \int_{\mathbb{S}^{n-1}} |\sigma_n|^p \, d\sigma$, with the convention that  $\|\nabla f\|_p =\infty$ if $f\not\in W^{1,p}(\Omega)$.  
\end{theorem}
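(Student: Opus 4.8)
The plan is to treat the cases $f\in W^{1,p}(\Omega)$ and $f\notin W^{1,p}(\Omega)$ separately, exploiting throughout that the condition $|x-y|<\tau d(x)$ forces the closed segment $[x,y]$ to lie inside $\Omega$, so that all the analysis reduces to what happens on balls compactly contained in $\Omega$.

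\emph{The formula when $f\in W^{1,p}(\Omega)$.} The first ingredient is a uniform upper bound. Passing to polar coordinates $y=x+r\sigma$ with $0<r<\tau d(x)$ and $\sigma\in\mathbb{S}^{n-1}$, using that $W^{1,p}$ functions are absolutely continuous on almost every segment so that $f(x+r\sigma)-f(x)=\int_0^r\partial_\sigma f(x+t\sigma)\,dt$, applying Jensen's inequality, and then Fubini twice---first in the variables $(r,t)$, then exchanging $x$ with $z=x+t\sigma$---one obtains, for $s$ close to $1$,
\[
(1-s)\int_\Omega\int_{|x-y|<\tau d(x)}\frac{|f(x)-f(y)|^p}{|x-y|^{n+sp}}\,dy\,dx\ \le\ C_{n,p}(s)\int_\Omega|\nabla f(z)|^p\,dz,
\]
where in the last Fubini step one uses the elementary inclusion $\{x:|x-z|<\tau d(x)\}\subset B\!\left(z,\tfrac{\tau}{1-\tau}d(z)\right)$, and where $C_{n,p}(s)\to|\mathbb{S}^{n-1}|/p$ as $s\nearrow1$, so $C_{n,p}(s)$ is bounded near $s=1$. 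I would then prove \eqref{BBM-rest} first for $f\in C^\infty(\Omega)\cap W^{1,p}(\Omega)$. For such $f$ and each fixed $x$, the ball $B(x,\tau d(x))$ is compactly contained in $\Omega$, so the classical pointwise Bourgain--Br\'ezis--Mironescu computation (Taylor expansion, polar coordinates, $\int_{\mathbb{S}^{n-1}}|v\cdot\sigma|^p\,d\sigma=p\,K_{n,p}|v|^p$) gives $(1-s)\int_{|h|<\tau d(x)}|h|^{-n-sp}|f(x+h)-f(x)|^p\,dh\to K_{n,p}|\nabla f(x)|^p$ for every $x\in\Omega$. On $\Omega_\delta:=\{x\in\Omega:d(x)>\delta\}$ the quantity under the limit is bounded by $\|\nabla f\|_{L^\infty(\{d\ge(1-\tau)\delta\})}^p$ times a bounded factor, and $\{d\ge(1-\tau)\delta\}$ is a compact subset of $\Omega$, so dominated convergence yields \eqref{BBM-rest} with $\Omega$ replaced by $\Omega_\delta$; meanwhile the same double--Fubini argument, restricted to $\{d(x)\le\delta\}$, bounds the remaining piece by $C_{n,p}(s)\int_{\{d\le(1+\tau)\delta\}}|\nabla f|^p$, which tends to $0$ as $\delta\to0$ because $|\nabla f|^p\in L^1(\Omega)$ and $d>0$ throughout $\Omega$. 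Letting $\delta\to0$ proves \eqref{BBM-rest} for smooth $f$. For arbitrary $f\in W^{1,p}(\Omega)$ I would take $f_k\in C^\infty(\Omega)\cap W^{1,p}(\Omega)$ with $f_k\to f$ in $W^{1,p}(\Omega)$ (Meyers--Serrin); since $g\mapsto\big(\int_\Omega\int_{|x-y|<\tau d(x)}|g(x)-g(y)|^p|x-y|^{-n-sp}\,dy\,dx\big)^{1/p}$ is a seminorm, the triangle inequality together with the uniform bound applied to $f-f_k$ shows that $s\mapsto\big((1-s)\int_\Omega\int_{|x-y|<\tau d(x)}|f_k(x)-f_k(y)|^p|x-y|^{-n-sp}\,dy\,dx\big)^{1/p}$ converges, as $k\to\infty$, \emph{uniformly} on a left neighbourhood of $1$, to the corresponding function for $f$; since in addition $K_{n,p}^{1/p}\|\nabla f_k\|_p\to K_{n,p}^{1/p}\|\nabla f\|_p$, the standard interchange of iterated limits gives \eqref{BBM-rest} for $f$.

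\emph{The case $f\notin W^{1,p}(\Omega)$.} Here \eqref{BBM-rest}, with the stated convention, asserts that the limit is $+\infty$, so it suffices to show that finiteness of $\liminf_{s\nearrow1}(1-s)\int_\Omega\int_{|x-y|<\tau d(x)}|f(x)-f(y)|^p|x-y|^{-n-sp}\,dy\,dx$ forces $f\in W^{1,p}(\Omega)$. Fix an open set $U$ with $\overline{U}\subset\Omega$ that is an extension domain---for instance a finite union of balls whose closures lie in $\Omega$---and put $\delta_0=\mathrm{dist}(U,\partial\Omega)>0$. For $x\in U$ one has $\{|x-y|<\tau\delta_0\}\subset\{|x-y|<\tau d(x)\}$, so dropping the outer region $\Omega\setminus U$ and the far field $|x-y|\ge\tau\delta_0$---whose contribution is at most $2^p|\mathbb{S}^{n-1}|(sp)^{-1}(\tau\delta_0)^{-sp}\|f\|_{L^p(U)}^p$ and hence vanishes after multiplication by $1-s$---one gets
\[
\liminf_{s\nearrow1}(1-s)\,[f]_{W^{s,p}(U)}^p\ \le\ \liminf_{s\nearrow1}(1-s)\int_\Omega\int_{|x-y|<\tau d(x)}\frac{|f(x)-f(y)|^p}{|x-y|^{n+sp}}\,dy\,dx .
\]
If the right side is finite, the classical result \eqref{BBM} on the extension domain $U$ (see \cite{BBM}, and also \cite{MS,M}) gives $f\in W^{1,p}(U)$ with $K_{n,p}\|\nabla f\|_{L^p(U)}^p$ equal to the left-hand $\liminf$. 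Exhausting $\Omega$ by an increasing sequence of such domains and invoking monotone convergence then yields $f\in W^{1,p}(\Omega)$, a contradiction. This completes the proof.

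\emph{Main difficulty.} The delicate point is the passage to the limit near $\partial\Omega$ in the smooth case: since no regularity of $\partial\Omega$ is assumed, $\nabla f$ need not be controlled up to the boundary and the only leverage is the integrability of $|\nabla f|^p$. The device that makes this work is the double--Fubini argument yielding the uniform bound above, which converts an integral over $x$ in a boundary layer into an integral of $|\nabla f(z)|^p$ over a comparable boundary layer, where it is small by absolute continuity of the integral. A lesser technical point is having the classical Bourgain--Br\'ezis--Mironescu theorem available on balls (and finite unions of balls) with the limit behaving correctly as $s\nearrow1$, but this is standard for extension domains.
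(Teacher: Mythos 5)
Your proposal is correct, and its overall skeleton matches the paper's: the pointwise Bourgain--Br\'ezis--Mironescu limit for smooth functions, a uniform bound of $(1-s)$ times the restricted seminorm by $C\|\nabla f\|_p^p$ that feeds both the passage to the limit and the density step, and the converse direction (finiteness of the $\liminf$ forces $f\in W^{1,p}(\Omega)$) via an exhaustion by compactly contained extension domains, a near/far splitting of the Gagliardo seminorm, the classical formula \eqref{BBM}, and monotone convergence --- this last part is essentially identical to the paper's Step 5. Where you genuinely diverge is in the key technical tool. The paper proves a \emph{pointwise} majorization $(1-s)F_s(x)\le C\,|\nabla f|^*(x)^p$ (Lemma \ref{lema1}), with $|\nabla f|^*$ the $L^p$-bounded directional maximal function from Calder\'on's estimate (Lemma \ref{lema calderon}); this yields an $s$-independent integrable dominating function, so dominated convergence over all of $\Omega$ is immediate in Step 3, and the same lemma applied to $f-f_k$ runs the density argument. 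You instead derive only an \emph{integrated} bound, by Jensen plus a double Fubini exploiting the inclusion $\{x:|x-z|<\tau d(x)\}\subset B\!\left(z,\tfrac{\tau}{1-\tau}d(z)\right)$, and you compensate for the absence of a pointwise majorant by the boundary-layer decomposition $\Omega=\Omega_\delta\cup\{d\le\delta\}$: a constant dominates on $\Omega_\delta$, while the layer is absorbed into $C\int_{\{d\le(1+\tau)\delta\}}|\nabla f|^p$, small by absolute continuity of the integral. Your route is more elementary (no maximal-function input) at the cost of the extra limit in $\delta$; the paper's route gives a one-shot dominated convergence and a pointwise bound of independent interest (the embedding $W^{1,p}(\Omega)\subset\widetilde W^{s,p}(\Omega)$). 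Two small points to tidy: (i) applying your uniform bound to $f-f_k$, which is not smooth, requires either the ACL property along a.e.\ segment in every direction (with a Fubini over $\sigma$) or a Fatou--Meyers--Serrin reduction --- you gesture at the former, and the paper's Lemma \ref{lema1} relies on the same standard reduction; (ii) a finite union of balls with closures in $\Omega$ need not be an extension domain (tangent spheres create cusps and the union may be disconnected), so in the converse step it is safer to exhaust, as the paper does, by smooth compactly contained open sets, which always exist; nothing else in your argument changes.
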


\begin{remark}
When  $|x-y|<\tau d(x)$ for $\tau \in (0,1)$, it is clear that the geometric distance equals the euclidean distance,  that is, $\delta(x,y)=|x-y|$. Therefore, Theorem  \ref{teo} and the obvious inequality
\begin{align*}
\lim\inf_{s\nearrow 1} (1-s)&\int_\Omega \int_{|x-y|<\tau d(x)} \frac{|f(x)-f(y)|^p}{\delta(x,y)^{n+sp}} \, dx \, dy \\
&\le \lim\sup_{s\nearrow 1} (1-s)\int_\Omega \int_\Omega \frac{|f(x)-f(y)|^p}{\delta(x,y)^{n+sp}} \, dx \, dy 
\end{align*}
show that the answer to the first part of \cite[Open problem 1]{B} is positive. 

Also, in view of Theorem \ref{teo}, the second part of the problem is true if and only if
$$
\lim_{s\nearrow 1} (1-s) \int_\Omega \int_{|x-y|>\tau d(x)} \frac{|f(y)-f(x)|^p}{\delta(x,y)^{n+sp}} \, dy \, dx =0.
$$
\end{remark}

The fact that the  $\widetilde W^{s,p}(\Omega)$ seminorm, unlike those in \eqref{BBM} or \eqref{limsup},  is not symmetric  is one of the main difficulties of our proof, another one being the fact that, in irregular domains, we cannot count on the density of smooth functions up to the boundary, so that some bounds (to apply dominated convergence in Step 3 below, for instance) will require more careful arguments than in the original proof. We recall some tools that we will need to handle these difficulties in the next section, and devote the last section of this paper to the proof itself.

\section{Preliminary results}

As mentioned before, for arbitrary $\Omega$ it may happen that $W^{1,p}(\Omega)\not\subset W^{s,p}(\Omega)$. However, one always has the inclusion  $W^{1,p}(\Omega)\subset \widetilde W^{s,p}(\Omega)$. To prove it, we will use the following lemma, which is a special case of \cite[Lemma 7]{C}:
\begin{lemma}
\label{lema calderon}
For $1<p<\infty$, $h\in L^p(\mathbb{R}^n)$, $h\ge 0$,  and $\nu$ a unit vector, let
$$
h_1(x,\nu) = \sup_{t>0} \frac{1}{t} \int_0^t h(x+\nu s) \, ds 
$$ 
and
$$
h^*(x) = \left(\int_{\mathbb{S}^{n-1}} h_1(x,\nu)^p \, d\sigma_\nu \right)^\frac{1}{p}.
$$
Then, $h^* \in L^p$ and $\|h^*\|_p\le C \|h\|_p$.
\end{lemma}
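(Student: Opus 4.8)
The plan is to reduce the $n$-dimensional estimate to the classical one-dimensional Hardy--Littlewood maximal inequality by slicing $\mathbb{R}^n$ along lines parallel to $\nu$. First I would fix the unit vector $\nu$ and observe that $h_1(x,\nu)$ depends on $x$ only through the restriction of $h$ to the line $\{x+s\nu:s\in\mathbb{R}\}$. Concretely, parametrizing a typical such line by $x=y+r\nu$ with $y$ ranging over the hyperplane $\nu^{\perp}$ and setting $g_y(r)=h(y+r\nu)$, the change of variable $\rho=r+s$ gives
$$
h_1(y+r\nu,\nu)=\sup_{t>0}\frac{1}{t}\int_0^t g_y(r+s)\,ds=\sup_{t>0}\frac{1}{t}\int_r^{r+t} g_y(\rho)\,d\rho,
$$
which is exactly the one-sided (forward) Hardy--Littlewood maximal function of $g_y$ evaluated at $r$.

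Next I would dominate this one-sided maximal function by the ordinary uncentered one-dimensional maximal function, $h_1(y+r\nu,\nu)\le Mg_y(r)$, and invoke the one-dimensional maximal theorem. For each fixed $y\in\nu^{\perp}$ this yields
$$
\int_{\mathbb{R}} h_1(y+r\nu,\nu)^p\,dr\le\int_{\mathbb{R}}(Mg_y)(r)^p\,dr\le C_p\int_{\mathbb{R}}|g_y(r)|^p\,dr,
$$
with $C_p$ the absolute constant from the one-dimensional theory, which is crucially independent of both $y$ and $\nu$. Integrating in $y$ over $\nu^{\perp}$ and applying Fubini's theorem (the map $x=y+r\nu$ is an isometry of $\mathbb{R}^n$) gives, for every $\nu$,
$$
\int_{\mathbb{R}^n} h_1(x,\nu)^p\,dx\le C_p\,\|h\|_p^p.
$$

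Finally I would raise $h^*$ to the $p$-th power, integrate over $\mathbb{R}^n$, and use Tonelli's theorem (all integrands are nonnegative since $h\ge 0$) to exchange the integral over $\mathbb{S}^{n-1}$ with the integral over $\mathbb{R}^n$:
$$
\|h^*\|_p^p=\int_{\mathbb{R}^n}\int_{\mathbb{S}^{n-1}} h_1(x,\nu)^p\,d\sigma_\nu\,dx=\int_{\mathbb{S}^{n-1}}\int_{\mathbb{R}^n} h_1(x,\nu)^p\,dx\,d\sigma_\nu\le C_p\,|\mathbb{S}^{n-1}|\,\|h\|_p^p,
$$
so that $\|h^*\|_p\le C\|h\|_p$ with $C=(C_p|\mathbb{S}^{n-1}|)^{1/p}$. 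The only points that demand care, rather than presenting a genuine obstacle, are the joint measurability of $(x,\nu)\mapsto h_1(x,\nu)$ required to legitimize Tonelli, which follows from realizing the supremum over $t>0$ as a countable supremum over rational $t$, and the uniformity of $C_p$ in $\nu$, which is immediate from the rotation invariance of the one-dimensional maximal inequality. The essential idea is simply that $h^*$ is a spherical average of one-dimensional maximal functions, each controlled on its own line by a dimension-free constant.
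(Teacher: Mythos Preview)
Your argument is correct. The paper does not supply its own proof of this lemma; it simply records it as a special case of \cite[Lemma~7]{C} and moves on. What you have written is precisely the standard route to this inequality---slicing $\mathbb{R}^n$ into lines parallel to $\nu$, recognizing $h_1(\cdot,\nu)$ on each line as a one-sided Hardy--Littlewood maximal function, applying the one-dimensional maximal theorem with a constant independent of the line and of $\nu$, and then averaging over the sphere via Tonelli. This is in fact the argument behind Calder\'on's lemma, so you have effectively reconstructed the cited result rather than diverged from the paper. The measurability and uniformity remarks you flag are the only technical points, and you handle them correctly.
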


Now we are ready to prove our claim:
\begin{lemma}
\label{lema1}
Let $\Omega \subset \mathbb{R}^n$ be a bounded domain, 
$$
F_s(x)= \int_{|h|<\tau d(x)} \frac{|f(x+h)-f(x)|^p}{|h|^{n+sp} } \, dh,
$$
and 
$$
|f|_{\widetilde W^{s,p}(\Omega)} = \left( \int_\Omega F_s(x) \, dx\right)^\frac{1}{p}.
$$

Then, for $1<p<\infty$, we have the pointwise bound
\begin{equation*}
\label{pointwise}
(1-s) F_s(x) \le C |\nabla f|^*(x)^p.
\end{equation*}

In particular, 
$$(1-s) |f|_{\widetilde W^{s,p}(\Omega)}^p \le C \| \nabla f\|_{p}^p,$$
which means $W^{1,p}(\Omega) \subset \widetilde{W}^{s,p}(\Omega)$.
\end{lemma}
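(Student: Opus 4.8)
The plan is to reduce the estimate to a one–dimensional fundamental theorem of calculus along segments that stay inside $\Omega$, and then to invoke Lemma~\ref{lema calderon}. Put $g=|\nabla f|$ on $\Omega$ and $g=0$ on $\mathbb{R}^n\setminus\Omega$, so that $g\in L^p(\mathbb{R}^n)$ and $g\ge 0$, and note that $|\nabla f|^*=g^*$ in the notation of Lemma~\ref{lema calderon}. The role of the cutoff $|h|<\tau d(x)$ is precisely that, for such $h$, the whole segment $\{x+th:0\le t\le 1\}$ is contained in the ball $B(x,d(x))\subset\Omega$. Consequently, using the absolute continuity of $f\in W^{1,p}(\Omega)$ on almost every line together with Fubini, for a.e.\ $x\in\Omega$ and a.e.\ $h$ with $|h|<\tau d(x)$ one has
\[
|f(x+h)-f(x)|\le |h|\int_0^1 g(x+th)\,dt .
\]
(Alternatively one may first take $f\in C^\infty(\Omega)\cap W^{1,p}(\Omega)$, for which this is immediate, and pass to general $f$ by Meyers--Serrin density and Fatou; but the pointwise statement of the lemma is cleanest via the ACL property.)

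Next I would pass to polar coordinates $h=r\nu$ with $r\in(0,\tau d(x))$ and $\nu\in\mathbb{S}^{n-1}$, so that $dh=r^{n-1}\,dr\,d\sigma_\nu$. The factor $|h|^p$ coming from the previous estimate cancels $r^p$ against the denominator $r^{n+sp}$, and after the substitution $\int_0^1 g(x+th)\,dt=\frac1r\int_0^r g(x+u\nu)\,du$ we obtain
\[
F_s(x)\le \int_{\mathbb{S}^{n-1}}\int_0^{\tau d(x)} r^{\,p(1-s)-1}\Big(\frac1r\int_0^r g(x+u\nu)\,du\Big)^{p}\,dr\,d\sigma_\nu .
\]
Bounding $\frac1r\int_0^r g(x+u\nu)\,du\le g_1(x,\nu)$, which is independent of $r$, and computing the elementary radial integral $\int_0^{\tau d(x)} r^{\,p(1-s)-1}\,dr=\frac{(\tau d(x))^{p(1-s)}}{p(1-s)}$, we arrive at
\[
(1-s)F_s(x)\le \frac{(\tau d(x))^{p(1-s)}}{p}\int_{\mathbb{S}^{n-1}} g_1(x,\nu)^p\,d\sigma_\nu .
\]

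Since $\Omega$ is bounded, $\tau d(x)\le \tau\,\mathrm{diam}(\Omega)$, hence $(\tau d(x))^{p(1-s)}\le \max\{1,(\tau\,\mathrm{diam}(\Omega))^p\}=:C_1$ uniformly in $x\in\Omega$ and $s\in(0,1)$. Recalling that $g^*(x)^p=\int_{\mathbb{S}^{n-1}}g_1(x,\nu)^p\,d\sigma_\nu$ and $g^*=|\nabla f|^*$, this is exactly the pointwise bound $(1-s)F_s(x)\le \tfrac{C_1}{p}\,|\nabla f|^*(x)^p$. Integrating over $\Omega$, enlarging the domain of integration to $\mathbb{R}^n$ (the integrand is nonnegative), and applying the $L^p$–boundedness $\|g^*\|_p\le C\|g\|_p=C\|\nabla f\|_p$ from Lemma~\ref{lema calderon}, we get $(1-s)\,|f|_{\widetilde W^{s,p}(\Omega)}^p\le C\,\|\nabla f\|_p^p$; in particular the left-hand side is finite whenever $f\in W^{1,p}(\Omega)$, which is the asserted inclusion $W^{1,p}(\Omega)\subset\widetilde W^{s,p}(\Omega)$.

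I expect the only genuine obstacle to be the first step: justifying the pointwise inequality $|f(x+h)-f(x)|\le|h|\int_0^1 g(x+th)\,dt$ for a function that is merely in $W^{1,p}$ of an arbitrary — possibly highly irregular — bounded domain, where one cannot appeal to density of functions smooth up to the boundary. This is exactly where the cutoff $|h|<\tau d(x)$ does the work, since it keeps every relevant segment in the interior of $\Omega$, so that the ACL characterization of Sobolev functions applies with no hypothesis on $\partial\Omega$; once this is in hand the remainder is a routine polar–coordinate computation followed by a single application of Calderón's maximal inequality.
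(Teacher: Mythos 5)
Your proof is correct and follows essentially the same route as the paper: the fundamental theorem of calculus along radial segments kept inside $\Omega$ by the cutoff $|h|<\tau d(x)$, polar coordinates, domination by the directional maximal function $h_1$, the elementary radial integral producing the factor $(1-s)^{-1}$, and Calder\'on's Lemma~\ref{lema calderon}. The only (minor) difference is that you justify the pointwise inequality $|f(x+h)-f(x)|\le|h|\int_0^1|\nabla f(x+th)|\,dt$ via the ACL property (or density plus Fatou), whereas the paper simply reduces to smooth $f$ by Meyers--Serrin; your handling is, if anything, slightly more careful on that point.
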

\begin{proof}
By the Meyers-Serrin theorem \cite{MeSe}, we may assume that $f$ is smooth. Then, if $\sigma=\frac{h}{|h|}$ and $r=|h|$,
$$
f(x+h)-f(x) =\int_0^1 \nabla f(x+th) \cdot h \, dt = \int_0^r \nabla f(x+s\sigma) \cdot \sigma \, ds 
$$
which implies that, extending $|\nabla f|$ by zero, 
\begin{align*}
(1-s) F_s(x) &= (1-s) \int_{|h|<\tau d(x)} \left| \int_0^r \nabla f (x+s\sigma) \cdot \sigma \, ds \right|^p \frac{1}{|h|^{n+sp}} \, dh\\
& \le (1-s) \int_0^{\tau d(x)} \int_{\mathbb{S}^{n-1}} \left( \int_0^r  |\nabla f (x+s\sigma)| \, ds  \right)^p  \, d\sigma \, \frac{1}{r^{sp+1}} \, dr\\
& = (1-s) \int_0^{\tau d(x)} \int_{\mathbb{S}^{n-1}} \left( \frac{1}{r} \int_0^r  |\nabla f (x+s\sigma)| \, ds  \right)^p  \, d\sigma \, r^{p-sp-1} \, dr\\
& \le (1-s) \int_0^{\tau d(x)} |\nabla f|^* (x)^p \, r^{p-sp-1} \, dr\\
&= \frac{(\tau d(x))^{p(1-s)}}{p} |\nabla f|^*(x)^p.
 \end{align*}
 
Finally, by Lemma \ref{lema calderon}, we obtain
$$
(1-s) |f|_{\widetilde W^{s,p}(\Omega)}^p \le C \|\nabla f\|_p^p.
$$
\end{proof}

\bigskip
\section{Proof of Theorem \ref{teo}}

As in \cite{BBM}, it suffices to show that \eqref{BBM-rest} holds for $f\in W^{1,p}(\Omega)$, and that, if $f\in L^p(\Omega)$ and
$$
\widetilde{A} =\liminf_{s \nearrow 1} (1-s) \int_\Omega \int_{|h|<\tau d(x)} \frac{|f(x+h)-f(x)|^p}{|h|^{n+sp}} \, dh \, dx < \infty, 
$$
then $f \in W^{1,p}(\Omega)$. We divide the proof into several steps. As mentioned in the introduction, the first two steps mimic closely those in \cite{BBM}, and it is the rest of the proof that requires different arguments. However, we have chosen to include full proofs of all the steps to make this presentation self-contained. 

\medskip

{\bf STEP 1:} We show that, for $f\in C^2(\Omega)$,
$$
\lim_{s \nearrow 1} (1-s)  \int_{|x-y|< \tau d(x)}  \left(\left| \nabla f(x) \cdot \frac{x-y}{|x-y|^s}\right|\right)^p \frac{1}{|x-y|^n} \, dy
 = K_{n,p}  |\nabla f(x)|^p 
$$
with $K_{n,p}= \frac{1}{p} \int_{\mathbb{S}^{n-1}} |\sigma_n|^p \, d\sigma$.
\begin{proof}
Since $x$  is fixed, we may assume, by rotation, that the vector   $\nabla f(x)$ has only its $n$-th coordinate different  from zero. Then,
\begin{align*}
\int_{|x-y|< \tau d(x)}  \left(\left| \nabla f(x) \cdot \frac{x-y}{|x-y|^s}\right|\right)^p \frac{1}{|x-y|^n}\, dy &= \int_0^{\tau d(x)} \int_{\mathbb{S}^{n-1}} |\nabla f(x) \cdot \sigma|^p r^{(1-s)p} \, d\sigma \frac{dr}{r}\\
&= \int_0^{\tau d(x)} \int_{\mathbb{S}^{n-1}} |\nabla f(x)|^p  |\sigma_n|^p r^{(1-s)p} \, d\sigma \frac{dr}{r}\\
&=  K_{n,p} \,  |\nabla f(x)|^p \, \frac{(\tau d(x))^{(1-s)p}}{(1-s)}
\end{align*}
and, therefore
$$
\lim_{s \nearrow 1} (1-s)  \int_{|x-y|< \tau d(x)}  \left(\left| \nabla f(x) \cdot \frac{x-y}{|x-y|^s}\right|\right)^p  \frac{1}{|x-y|^n} \, dy
 = K_{n,p} |\nabla f(x)|^p 
$$
\end{proof}

\medskip

{\bf STEP 2:}  We show that, for $f\in C^2(\Omega)$,
$$
\lim_{s \nearrow 1} (1-s) \int_{|x-y|< \tau d(x)} \frac{|f(x)-f(y)|^p}{|x-y|^{sp}} \frac{1}{|x-y|^n} \, dy = K_{n,p} |\nabla f(x)|^p.
$$
\begin{proof}
By the previous step, it suffices to show that 
\begin{align*}
\lim_{s \nearrow 1} (1-s) & \int_{|x-y|< \tau d(x)} \frac{|f(x)-f(y)|^p}{|x-y|^{sp}} \frac{1}{|x-y|^n} \, dy \\
&=  \lim_{s \nearrow 1} (1-s)  \int_{|x-y|< \tau d(x)}  \left(\left| \nabla f(x) \cdot \frac{x-y}{|x-y|^s}\right|\right)^p \frac{1}{|x-y|^n} \, dy.
\end{align*}

Now, since the limit of the right-hand side exists, it suffices to show that
\begin{equation*}
\lim_{s \nearrow 1} (1-s)  \int_{|x-y|< \tau d(x)} \left| \left(\frac{|f(x)-f(y)|}{|x-y|^s}\right)^p - \left( \left| \nabla f(x) \cdot \frac{x-y}{|x-y|^s}\right| \right)^p \right| \frac{1}{|x-y|^n} \, dy =0.
\end{equation*}

For fixed $x$ and all $y$ such that $|x-y|<\tau d(x)$, since taking $p$ powers is locally Lipschitz, we have that
\begin{align*}
\left| \left(\frac{|f(x)-f(y)|}{|x-y|^s}\right)^p - \left( \left| \nabla f(x) \cdot \frac{x-y}{|x-y|^s}\right| \right)^p \right| & \le C \frac{|f(x)-f(y)-\nabla f(x)\cdot (x-y)|}{|x-y|^s}\\
&\le C |x-y|^{2-s}
\end{align*}
(where the constant depends on $x$).

Combining this bound with
$$
\int_{|x-y|<\tau d(x)} |x-y|^{2-s-n} \, dy = \omega_n \int_0^{\tau d(x)} r^{1-s} \, dr = \frac{\omega_n}{2-s} (\tau d(x))^{2-s}
$$
and
$$
\lim_{s\nearrow 1} (1-s) \int_{|x-y|<\tau d(x)} |x-y|^{2-s-n} \, dy = 0, 
$$
we obtain the desired equality. 
\end{proof}

\medskip 

{\bf STEP 3:} We show that, for $f\in C^2(\Omega) \cap W^{1,p}(\Omega)$ 
$$
\lim_{s\nearrow 1} (1-s) |f|_{\widetilde W^{s,p}}^p = K_{n,p} \int_\Omega |\nabla f(x)|^p \, dx.
$$
\begin{proof}
By dominated convergence, it suffices to show that there exists an integrable $g$ such that $(1-s)|F_s(x)|\le g(x)$ with
$$
F_s(x) =\int_{|x-y|< \tau d(x)} \frac{|f(x)-f(y)|^p}{|x-y|^{n+sp}} \, dy,
$$
but this follows immediately by Lemma \ref{lema1}, taking $g=(|\nabla f|^*)^p$. 
\end{proof}

\medskip

{\bf STEP 4:}  We show that, for $f\in W^{1,p}(\Omega)$, 

\begin{equation}
\label{lim-C2}
\lim_{s\nearrow 1} (1-s) |f|_{\widetilde W^{s,p}}^p = K_{n,p} \int_\Omega |\nabla f(x)|^p \, dx.
\end{equation}

\begin{proof}

This follows by density. Namely, we consider $(f_k)_{k\in \mathbb{N}} \subset C^2(\Omega)\cap W^{1,p}(\Omega)$ such that $f_k\to f$ in $W^{1,p}(\Omega)$. Then, 
\begin{align*}
 (1-s) |f|_{\widetilde W^{s,p}}^p - K_{n,p}  \|\nabla f\|_p^p &\le  (1-s) \Big| |f|_{\widetilde W^{s,p}}^p -  |f_k|_{\widetilde W^{s,p}}^p \Big|  \\
 &+ \Big| (1-s) |f_k|_{\widetilde W^{s,p}}^p - K_{n,p}  \|\nabla f_k\|_p^p \Big| \\
 &+  K_{n,p} \Big|  \|\nabla f_k\|_p^p - \|\nabla f\|_p^p \Big|.
 \end{align*}

But, since $f_k \to f$ in $W^{1,p}(\Omega)$,  by Lemma \ref{lema1} the first and last terms in the right hand-side of the previous inequality converge to zero. Finally, by Step 3, 
$$
\lim_{s\nearrow 1} (1-s) |f_k|_{\widetilde W^{s,p}}^p = K_{n,p}  \|\nabla f_k\|_p^p,
$$
and we obtain \eqref{lim-C2}.
\end{proof}

\medskip 

{\bf STEP 5:} We show that, if  $f\in L^p(\Omega)$ and 
$$
\widetilde{A} =\liminf_{s \nearrow 1} (1-s) \int_\Omega \int_{|h|<\tau d(x)} \frac{|f(x+h)-f(x)|^p}{|h|^{n+sp}} \, dh \, dx < \infty, 
$$
then $f \in W^{1,p}(\Omega)$, which completes the proof of the theorem.

\begin{proof}
Consider an exhaustion $\{\Omega_j\}_{j \in \mathbb{N}}$ of smooth bounded domains such that 
$$
\overline\Omega_j\subset \Omega_{j+1} \, \mbox{ and } \,  \bigcup_{j=1}^\infty \Omega_j =\Omega.
$$
Then, there exist a decreasing sequence of positive numbers  $\{\alpha_j\}_{j\in \mathbb{N}}$ such that, for all $x \in \Omega_j$, $d(x)=dist(x, \partial \Omega)\ge \alpha_j >0$. 

We want to  apply the Bourgain-Br\'ezis-Mironescu formula \eqref{BBM} to the sets $\Omega_j$ and prove that the limit is a finite quantity independent of $j$. To this end, we write
\begin{align*}
\int_{\Omega_j} \int_{\Omega_j} \frac{|f(y)-f(x)|^p}{|x-y|^{n+sp} } \, dy \, dx &= \int_{\Omega_j} \int_{\{y\in \Omega_j : |x-y|<\tau d(x)\}}  \frac{|f(y)-f(x)|^p}{|x-y|^{n+sp} } \, dy \, dx\\
& + \int_{\Omega_j} \int_{\{y\in \Omega_j : |x-y|\ge\tau d(x)\}}  \frac{|f(y)-f(x)|^p}{|x-y|^{n+sp} } \, dy \, dx\\
&= {\Large{\textcircled{\small I}}} +{ \Large{\textcircled{\small II}} }
\end{align*}

Clearly, 
\begin{equation*}
{\Large{\textcircled{\small I}}} \le \int_{\Omega} \int_{|x-y|<\tau d(x)}  \frac{|f(y)-f(x)|^p}{|x-y|^{n+sp} } \, dy \, dx
\end{equation*}
so that, by hypothesis, 
$$
\liminf_{s \nearrow 1} (1-s) \,  {\Large{\textcircled{\small I}}} \le \widetilde{A}. 
$$

To bound {\Large{\textcircled{\small II}}}, we use that $d(x)\ge \alpha_j>0$ for all $x\in \Omega_j$ and obtain 
\begin{align*}
{\Large{\textcircled{\small II}}} &\le \int_{\Omega_j} \int_{\{y\in \Omega_j : |x-y|\ge \tau \alpha_j\}}  \frac{|f(y)-f(x)|^p}{|x-y|^{n+sp} } \, dy \, dx\\
&\le  \frac{1}{(\tau \alpha_j)^{n+sp}}  \int_{\Omega_j} \int_{\Omega_j} |f(y)-f(x)|^p \, dy \, dx\\
&\le \frac{2^{p+1}  |\Omega_j|}{(\tau \alpha_j)^{n+sp}} \|f\|_p^p
\end{align*}
so that, 
$$
\liminf_{s \nearrow 1} (1-s) \,  {\Large{\textcircled{\small II}}} =0. 
$$

Therefore, since  $\liminf_{s \nearrow 1} (1-s)  |f|_{W^{s,p}(\Omega_j)}\le \widetilde A$, by \eqref{BBM} we have that $ f \in W^{1,p}(\Omega_j)$ for each $j$ and that the seminorms are uniformly bounded. Hence,   $ f \in W^{1,p}(\Omega)$, as we wanted to see. 

\end{proof}

\end{document}